\newtheorem{theorem}{Theorem}[section]
\newtheorem*{remark}{Remark}
\newtheorem{proposition}[theorem]{Proposition}
\newtheorem{lemma}[theorem]{Lemma}
\newtheorem{claim}[theorem]{Claim}
\newcommand{\N}{\ensuremath{\mathbb{N}}}
\newcommand{\R}{\ensuremath{\mathbb{R}}}
\newcommand{\Z}{\ensuremath{\mathbb{Z}}}
 \newcommand{\eps}{\varepsilon}
\renewcommand{\epsilon}{\varepsilon}
\DeclareMathOperator*{\expect}{\mathbb{E}}
\newcommand{\pois}{\mathrm{Pois}}
\newcommand{\Pois}{\pois}
\newcommand{\pr}[2]{({#1, #2})}
\def\imod#1{\allowbreak\mkern8mu({\operator@font mod}\,\,#1)}
\newcommand{\puv}{p_{u,v}}
\newcommand{\puu}{p_{u,u}}
\newcommand{\pvv}{p_{v,v}}
\newcommand{\ruv}{r_{u,v}}
\newcommand{\rxy}{r_{x,y}}
\newcommand{\pxy}{p_{x,y}}
\newcommand{\zerovec}{{\mathbf{0}}}
\DeclarePairedDelimiter\parens{(}{)}
\newif\ifnotes\notesfalse
\definecolor{mygrey}{gray}{0.50}
\newcommand{\notename}[2]{{\textcolor{mygrey}{\footnotesize{\bf (#1:} {#2}{\bf ) }}}}
\newcommand{\noteswarning}{{\begin{center} {\Large WARNING: NOTES ON}\end{center}}}
\newcommand{\notename}[2]{{}}
\newcommand{\noteswarning}{{}}
\begin{document}

\title{A counterexample to monotonicity of relative mass in random walks}

%\title{A Counterexample to Monotonicity of Relative Mass in Random Walks}

\author{
Oded Regev\thanks{Courant Institute of Mathematical Sciences, New York
 University.}
~\thanks{Supported by the Simons Collaboration on Algorithms and Geometry and by the National Science Foundation (NSF) under Grant No.~CCF-1320188. Any opinions, findings, and conclusions or recommendations expressed in this material are those of the authors and do not necessarily reflect the views of the NSF.}\\
%\texttt{regev@cims.nyu.edu}
\and
Igor Shinkar\footnotemark[1]
~\thanks{
    Research supported by NSF grants CCF 1422159, 1061938, 0832795  and Simons Collaboration on Algorithms and Geometry grant.
}
}
\date{}
\maketitle

%%%% DON'T REMOVE %%%%%
\noteswarning
%%%% DON'T REMOVE %%%%%

%\thispagestyle{empty}
%
%\newpage

\begin{abstract}
For a finite undirected graph $G = (V,E)$, let $\puv(t)$ denote the
probability that a continuous-time random walk
starting at vertex $u$ is in $v$ at time $t$.
In this note we give an example of a Cayley graph $G$ and two vertices
$u,v \in G$ for which the function
\[
	\ruv(t) = \frac{\puv(t)}{\puu(t)} \qquad t \geq 0
\]
is not monotonically non-decreasing.
This answers a question asked by Peres in 2013.
\end{abstract}

\section{Introduction}

Let $G = (V,E)$ be a finite undirected regular graph.
Let $\puv(t)$ denote the probability that a continuous-time random walk
starting at vertex $u$ is in $v$ at time $t$.
In this note we are interested in the function
\[
	\ruv(t) = \frac{\puv(t)}{\puu(t)} \qquad t \geq 0 \; .
\]
Clearly, in regular connected graphs for any $u \neq v$, we have $\ruv(0)=0$ and $\lim_{t \to \infty}\ruv(t) = 1$.
One might wonder if the function is monotonically non-decreasing.
It is not difficult to see that there are regular graphs for which this is \emph{not}
the case.
In fact, there are regular graphs such that $\ruv(t) > 1$ for some vertices $u,v$ and time $t$;
in particular, $\ruv(t)$ is not monotonically non-decreasing.
We give an example of such a graph in Appendix~\ref{sec:appendix}.
%(This would be the case whenever a graph has a simple second eigenvalue $\lambda_2$,
%and the corresponding eigenvector $f_2 \in \R^V$ is such that $|f_2|$ is not constant.)
We thank Jeff Cheeger~\cite{Cheeger} for pointing this out to us.

For vertex-transitive graphs, however, it holds that $\ruv(t) \le 1$ for all vertices $u,v$ and all $t \geq 0$.
Indeed, using Cauchy-Schwarz and the reversibility of the walk,
\begin{align*}
	\puv(t) & =
	\sum_{w \in V} p_{u,w}(t/2) p_{w,v}(t/2)\\
	& \leq \parens[\Big]{\sum_{w} p_{u,w}(t/2)^2}^{1/2} \cdot \parens[\Big]{\sum_{w} p_{w,v}(t/2)^2}^{1/2} \\
	& =  \puu(t)^{1/2} \cdot \pvv(t)^{1/2}
	 =  \puu(t) \; .
\end{align*}
This motivates the following question, asked in 2013 by Peres~\cite{Peres}:
\begin{quote}
Is the function $\ruv$ monotonically non-decreasing
in $t$ for all vertex-transitive graphs and all vertices $u,v$?
\end{quote}
More recently, a special case of that question was asked independently by Price~\cite{Price}.
Namely, Price asked whether for Brownian motion on flat tori (i.e., on $\R^n$ modulo a lattice),
it holds that for any point $x$, the density at $x$ divided by the density at the starting point
$x_0$ is monotonically non-decreasing in time. This would follow from a positive answer to Peres's question
through a limit argument. Price gave a positive answer to his question for the case of a cycle ($n=1$)
and recently, a positive answer for arbitrary flat tori was found~\cite{RSD15}. This can be seen
as further evidence for a positive answer to Peres's question.

In this note we give a negative answer to Peres's question.
In fact, we do so through a Cayley graph.

\begin{theorem}\label{thm:non-monotone}
	There exists a Cayley graph $G = (V,E)$ and two vertices $u,v \in V$
	such that the function $\ruv$ is not monotonically non-decreasing.
\end{theorem}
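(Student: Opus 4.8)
To prove Theorem~\ref{thm:non-monotone} we search for the example among Cayley graphs of finite abelian groups, where the heat kernel has a transparent spectral form. Fix a finite abelian group $\Gamma$ with a symmetric generating multiset $S$ of size $d$, let $G$ be the associated $d$-regular Cayley graph, and run the continuous-time walk with generator $A-d\,\Id$ (the time scale is immaterial for monotonicity). Writing $\mu_\chi=d-\sum_{s\in S}\chi(s)\ge 0$ for the Laplacian eigenvalue of a character $\chi$ (a real number, by symmetry of $S$), one has $p_{0,x}(t)=\frac{1}{|\Gamma|}\sum_\chi\overline{\chi(x)}\,e^{-t\mu_\chi}$. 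Taking the starting vertex to be the identity $0$ and any $x\neq 0$: the return probability $p_{0,0}(t)=\frac{1}{|\Gamma|}\sum_\chi e^{-t\mu_\chi}$ is a positive, strictly decreasing sum of exponentials, while $p_{0,x}(t)$ is a signed sum with $p_{0,x}(0)=0$ and $p_{0,x}(t)\to|\Gamma|^{-1}$; hence $r_{0,x}(0)=0$, $r_{0,x}(t)\to1$, and, by the inequality for vertex-transitive graphs proved in the introduction, $0<r_{0,x}(t)<1$ for every $t>0$ (strictly, since $p_{0,x}\not\equiv p_{0,0}$ and both functions are real-analytic). In particular the cheap route available for non-transitive graphs, namely producing $r_{0,x}(t)>1$, is barred, and we must exhibit a genuine interior dip of $r_{0,x}$ strictly below~$1$.

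The plan is to reduce non-monotonicity to a single sign change of an explicit exponential sum. Since $p_{0,0}>0$, the function $r_{0,x}$ fails to be non-decreasing on $(0,\infty)$ exactly when its Wronskian $W(t)\eqdef p_{0,x}'(t)\,p_{0,0}(t)-p_{0,x}(t)\,p_{0,0}'(t)$ is negative somewhere; since $W$ is positive near both ends of $(0,\infty)$, this amounts to $W$ changing sign. Substituting the spectral formula and antisymmetrizing the double sum over pairs of characters gives
\[
  W(t)=\frac{1}{2|\Gamma|^{2}}\sum_{\chi,\psi}(\mu_\psi-\mu_\chi)\bigl(\overline{\chi(x)}-\overline{\psi(x)}\bigr)\,e^{-t(\mu_\chi+\mu_\psi)},
\]
a finite signed combination of decreasing exponentials whose exponents and coefficients depend only on the multiset $\{\mu_\chi\}$ and on $x$; for $\Gamma=\Z_2^k$ it collapses to $W(t)=c\sum_{y\in Y_+,\,z\in Y_-}(\mu_z-\mu_y)\,e^{-t(\mu_y+\mu_z)}$ with a positive constant $c$, where $Y_\pm=\{y:(-1)^{\langle x,y\rangle}=\pm1\}$. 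Once a candidate triple $(\Gamma,S,x)$ is in hand, checking that $W(t_1)>0>W(t_2)$ for two explicit rational times $t_1<t_2$ is a finite, certifiable computation, and by continuity this already proves the theorem.

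It remains to find a good candidate, and this is the heart of the matter. First, a working example must have at least four distinct Laplacian eigenvalues: if it has only the eigenvalues $0<a<b$ (the cases of one or two eigenvalues being similar), then writing $u=e^{-at}$ and $\kappa=b/a>1$ one gets $r_{0,x}(t)=\frac{1+\alpha u+\beta u^\kappa}{1+m_a u+m_b u^\kappa}$ with $1+\alpha+\beta=0$, $|\alpha|\le m_a$, $|\beta|\le m_b$; the numerator of the derivative of this rational function in $u$ works out to $c_0+c_1u^{\kappa-1}+c_2u^\kappa$ with $c_0=\alpha-m_a\le0$ and $c_1=\kappa(\beta-m_b)\le0$, hence has at most one zero on $(0,\infty)$, and together with $r_{0,x}(0)=0$, $r_{0,x}(\infty)=1$ and $0<r_{0,x}<1$ this forces $r_{0,x}$ to be monotone. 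So we search, over small groups and generating sets (for instance $\Z_2^k$ with $k$ of the order of $5$ to $8$, or cyclic groups $\Z_n$ with a handful of generators), for an $S$ whose spectrum clusters so that, for a well-chosen target $x$ (taking $x$ adjacent to $0$ is natural, since then $r_{0,x}$ is strictly increasing at $t=0$), the characters with $\overline{\chi(x)}<0$ carry enough weight at an intermediate eigenvalue scale to depress $p_{0,x}/p_{0,0}$ transiently before the slowest modes pull the ratio back up to $1$.

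The main obstacle is precisely this design step. One has to arrange that competition while respecting all the rigid constraints catalogued above, namely $r_{0,x}\le1$, positivity $p_{0,x}(t)>0$ for every $t>0$, and the impossibility of a dip with three or fewer eigenvalues, and, for a clean write-up, keep $G$ small enough that the final sign-change check is a short explicit calculation. That such a configuration exists at all is the content of the theorem, and it is the more striking because the analogous statement for Brownian motion on flat tori is \emph{true}~\cite{RSD15}: the counterexample must use the discreteness of $G$ in an essential way, so it cannot be extracted from any argument that survives a continuous scaling limit.
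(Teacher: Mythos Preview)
Your write-up is not a proof but a proof \emph{strategy}: you set up the spectral Wronskian $W(t)$ cleanly, you correctly reduce non-monotonicity to a sign change of $W$, and you rule out the three-eigenvalue case, but you never actually exhibit a triple $(\Gamma,S,x)$ for which $W$ changes sign. The sentence ``That such a configuration exists at all is the content of the theorem'' gives the game away: you have restated what must be proved, not proved it.

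More seriously, your search space is exactly the one in which the paper declares the problem \emph{open}. You restrict from the outset to Cayley graphs of finite abelian groups, but the paper states explicitly (last paragraph of the introduction) that whether $r_{u,v}$ is monotone on Abelian Cayley graphs is unresolved, with~\cite{RSD15} giving a positive answer on flat tori as partial evidence. So your proposed search over $\Z_2^k$ or $\Z_n$ may well come up empty, and nothing in your outline argues otherwise. The paper's construction is essentially non-abelian: it takes the lamplighter graph over the hypercube $Q_d$ (a Cayley graph of the wreath product $\Z_2\wr\Z_2^d$), chooses $u,v$ differing only by the lamp at the origin, and shows that for small lamp-toggling rate $\eps$ one has $r_{u,v}(t)=\eps\,C_d(t)+O(\eps^2)$, where $C_d(t)$ is the expected time the $Q_d$-walk spends at the origin in $[0,t]$ conditioned on being at the origin at time $t$. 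The work then goes into proving the purely hypercube fact that $C_d$ is non-monotone for large $d$ (it is of order $\sqrt{d}$ at $t=\sqrt{d}$ but bounded at $t=d$). The non-abelianness is what lets a single toggle ``record'' the occupation time at the origin; there is no analogous mechanism in your abelian setup.
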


One remaining open question is whether $\ruv$ is monotonically non-decreasing
for \emph{Abelian} Cayley graphs. The positive result of~\cite{RSD15} is
a special case of that.

\subsection{Some basic facts about continuous-time random walks}
Given a weighted finite graph $G=(V,E)$ with weight function $w :E \to \R_+$
a continuous-time random walk $X=(X_t)_{t \geq 0}$ on $G$
is defined by its heat kernel $H_t$, that at time $t>0$ is equal to
\[
	H_t = e^{-t \cdot L},
\]
where $L$ is the Laplacian matrix of $G$ given by $L_{u,v} = -w(u,v)$ for $u \neq v$,
and $L_{u,u} = \sum_{v} w(u,v)$.
As a result, for a random walk $X$ starting at a vertex $u$
the probability that $X$ is in $v$ at time $t$ is equal to
$\puv(t) := H_t(u,v)$.
When $G$ is a $d$-regular
unweighted simple graph,
we think of the edges as all having weight $1/d$, in which
case the Laplacian of $G$ is given by
\[
	L_{u,v} = \begin{cases}
			-1/d & \mbox{if $(u,v) \in E$} \\
			   1 & \mbox{if $u = v$} \\
			   0 & \mbox{otherwise.}
			   \end{cases}
\]

In this note we consider only vertex-transitive graphs, for which the sum $\sum_v w(u,v)$ is the same
for all vertices $u$ of the graph. Note that we do not insist that
this sum is equal to 1,
though this can be achieved by normalizing $L$, which corresponds to changing the speed of the
random walk. For basic facts about continuous-time random walks see, e.g.,~\cite{MCMTbook}.

If $G$ is a weighted Cayley graph with a generating set $S$ and a weight function $w:S \to \R_+$,
then a continuous-time random walk $X=(X_t)_{t \geq 0}$ on $G$ is described
by mutually independent Poisson processes of rate $w(g)$ for each group generator $g \in S$,
where each process indicates the times when $X$ jumps along the corresponding edge.

\section{Non-monotonicity of time spent at the origin in the hypercube graph}

For an integer $d \geq 1$ denote by $Q_d$ the $d$-dimensional hypercube graph.
The vertices of $Q_d$ are $\{0,1\}^d$ and there is an edge between two vertices
$u$ and $v$ if and only if they differ in exactly one coordinate.
Let $X = (X_t)_{t > 0}$ be a continuous-time random walk on $Q_d$
starting at the origin, denoted by $\zerovec = (0,\dots,0) \in \{0,1\}^d$.
Denote by $C_d(t)$ the expected time spent at the origin until time $t$,
conditioned on the event that $X_t = \zerovec$.
That is,
\[
	C_d(t) = \int_{0}^t\Pr[ X_s = \zerovec | X_t = \zerovec] ds.
\]
In this section we show that for $d$ sufficiently large
$C_d(t)$ is not monotonically non-decreasing.

\begin{lemma}\label{lemma:non-monotonicity}
	Let $d \in \N$ be sufficiently large.
	Then, there are some $t_1 < t_2$ such that $C_d(t_1) > C_d(t_2)$,
	and in particular, the function $C_d$ is not monotonically non-decreasing in $t$.
\end{lemma}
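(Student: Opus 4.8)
The plan is to obtain an explicit formula for $C_d(t)$ by exploiting the product structure of the hypercube, and then to analyze it for a suitable rescaling of time as $d \to \infty$. Since $Q_d$ is the $d$-fold Cartesian product of the single edge $K_2$, the continuous-time walk on $Q_d$ factors as $d$ independent continuous-time walks on $K_2$. For a single edge the return probability is $\tfrac{1}{2}(1+e^{-2s})$ (with the normalization $L_{u,v} = -1/d$, each coordinate flips at rate $1/d$, so more precisely the per-coordinate return probability at time $s$ is $\tfrac12(1+e^{-2s/d})$). Hence $\puu^{(d)}(s) = \Pr[X_s = \zerovec] = \bigl(\tfrac12(1+e^{-2s/d})\bigr)^{d} =: q_d(s)$, and by the Markov property $\Pr[X_s = \zerovec \wedge X_t = \zerovec] = q_d(s)\, q_d(t-s)$. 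Therefore
\[
	C_d(t) = \frac{1}{q_d(t)} \int_0^t q_d(s)\, q_d(t-s)\, ds .
\]

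Next I would substitute the natural scaling. With $\beta := e^{-2s/d}$ and $\gamma := e^{-2t/d}$ one has $q_d(s) = 2^{-d}(1+\beta)^d$, and the convolution integral becomes, after the change of variable and collecting powers of $2$,
\[
	C_d(t) = \frac{d}{2} \cdot \frac{1}{(1+\gamma)^d} \int_{\gamma}^{1} \frac{(1+\beta)^d \,(1+\gamma/\beta)^d}{\beta}\, d\beta ,
\]
where the factor $d/2$ comes from $ds = -\tfrac{d}{2}\,d\beta/\beta$. The integrand is a large power of $f(\beta) := (1+\beta)(1+\gamma/\beta)$ on $[\gamma,1]$. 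For $\gamma$ bounded away from $0$ and $1$, $f$ is strictly log-concave on this interval with an interior maximum, so a Laplace/saddle-point estimate shows $\int_\gamma^1 f(\beta)^d \beta^{-1} d\beta \sim c(\gamma)\, d^{-1/2}\, f(\beta^\star)^d$ where $\beta^\star$ maximizes $f$. Plugging this in, $C_d(t) \asymp \sqrt{d}\cdot \bigl(f(\beta^\star)/(1+\gamma)^2\bigr)^{d}$ up to a $\gamma$-dependent constant; one checks $f(\beta^\star) \ge (1+\gamma)^2$ with equality only in degenerate limits, so the exponential factor is $\ge 1$. The key point is that the base $g(\gamma) := f(\beta^\star(\gamma))/(1+\gamma)^2$ is \emph{not monotone} in $\gamma$ (equivalently in $t$): it tends to $1$ as $\gamma \to 1$ (i.e.\ $t \to 0$) and again approaches $1$ as $\gamma\to 0$ (i.e.\ $t\to\infty$, where the walk mixes), but is strictly $>1$ in between. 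Because $g(\gamma)^d$ is raised to the $d$-th power, even a tiny non-monotonicity in $g$ is amplified into a large non-monotonicity in $C_d$ for $d$ large: choose $\gamma_1 < \gamma_2$ (hence $t_1 > t_2$ — or relabel) with $g(\gamma_1) > g(\gamma_2)$, both in the non-degenerate range, and then $C_d$ at the corresponding times differ by an exponentially large factor, overwhelming the polynomial prefactors and the error terms in the Laplace approximation.

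The main obstacle is making the asymptotics uniform and explicit enough to compare $C_d$ at two fixed times: I need the Laplace-method estimate with controlled error terms (i.e.\ $C_d(t) = c(\gamma)\sqrt{d}\, g(\gamma)^d (1 + O(1/d))$ with the implied constants uniform for $\gamma$ in a compact subinterval of $(0,1)$), and I need to verify that $g$ genuinely has an interior strict maximum — this amounts to an elementary but slightly fiddly calculus exercise: solving $\frac{d}{d\beta}\log f = \frac{1}{1+\beta} - \frac{\gamma}{\beta(\beta+\gamma)} = 0$ for $\beta^\star$, substituting back, and differentiating $\log g(\gamma)$ in $\gamma$ to locate its sign change. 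Once the shape of $g$ is pinned down, picking the two times and invoking the uniform estimate finishes the proof.
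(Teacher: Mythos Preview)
Your central claim---that $f(\beta) = (1+\beta)(1+\gamma/\beta)$ is log-concave with an interior maximum on $[\gamma,1]$---is false, and this derails the whole Laplace analysis. In fact $f(\beta) = 1 + \gamma + \beta + \gamma/\beta$, so $f''(\beta) = 2\gamma/\beta^3 > 0$: the function is convex, the critical point $\beta^\star = \sqrt{\gamma}$ is a \emph{minimum}, and the maximum of $f$ on $[\gamma,1]$ is attained at the endpoints, where $f(\gamma)=f(1)=2(1+\gamma)$. Equivalently, $h_d(t,s) \le 1$ with equality exactly at $s\in\{0,t\}$, as the paper notes. You also dropped a factor of $2^d$ in the displayed formula for $C_d(t)$: the correct denominator is $(2(1+\gamma))^d$, not $(1+\gamma)^d$. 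With the correct normalization and the endpoint maxima, the exponential rate is $f(1)/(2(1+\gamma)) = 1$, so there is no $g(\gamma)^d$ with $g>1$; a boundary-Laplace computation instead gives $C_d(t) \to 2(1+\gamma)/(1-\gamma)$ for fixed $\gamma\in(0,1)$ (i.e.\ $t=\Theta(d)$), a bounded quantity---consistent with the paper's bound $C_d(d)\le 6$ and flatly contradicting your predicted exponential growth.

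Your strategy can be repaired, since the correct limit $2(1+\gamma)/(1-\gamma)$ is strictly decreasing in $t$, so two times $t_1<t_2$ of order $d$ already witness $C_d(t_1)>C_d(t_2)$ for large $d$. But the paper's argument is both simpler and more transparent: it compares $t_1=\sqrt{d}$ with $t_2=d$. At $t_1=\sqrt{d}$, the probability that the walk \emph{never left} the origin, namely $e^{-\sqrt{d}}$, is within a factor $e$ of the return probability $\Pr[X_{\sqrt d}=\zerovec]\le e^{-\sqrt d+1}$, so conditioning on return gives $C_d(\sqrt d)\ge e^{-1}\sqrt d$. At $t_2=d$, the convexity of $h_d(d,\cdot)$ is used to bound it above by the secant line, yielding $C_d(d)\le 2/c$ with $c=(1-e^{-1})^2/(1+e^{-2})$. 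No saddle-point asymptotics are needed.
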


\begin{remark}
Numerically, one can see that the function $C_d$ is not monotone for $d \ge 5$.
See Figure~\ref{fig:C_5}. Since $C_d$ has a closed form expression (as can be
seen from the calculations below), one can probably show
non-monotonicity directly for $C_5$ by analyzing the function,
though doing so would likely be messy and not too illuminating.

  \begin{figure}[htbp]
    \centering
    \includegraphics[height=60mm]{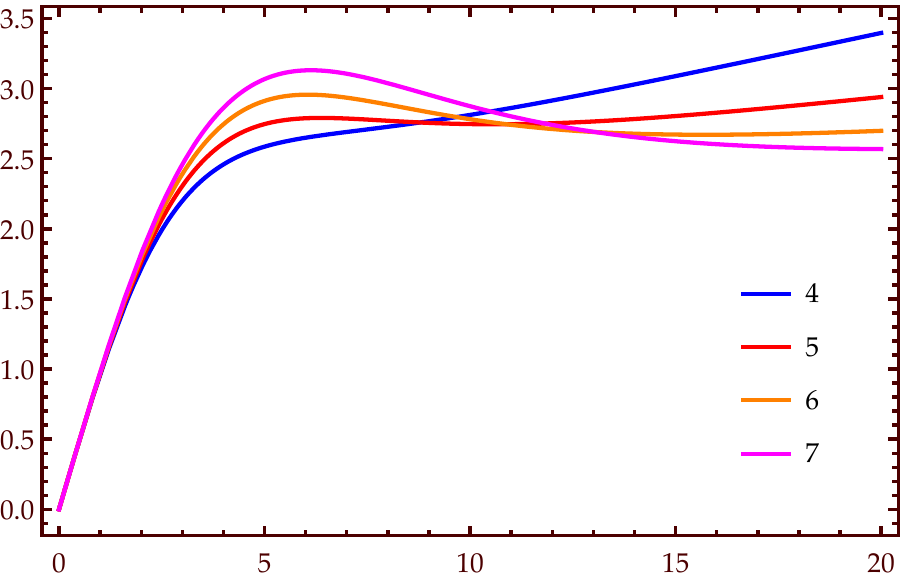}
    \caption{$C_d(t)$ for $d=4,5,6,7$ (from top right to bottom right).}
    \label{fig:C_5}
  \end{figure}
\end{remark}

Before proving Lemma~\ref{lemma:non-monotonicity} we prove the following claim.
\begin{claim}\label{claim:return prob}
	Let $d \ge 1$, and let $Q_d$ be the $d$-dimensional hypercube graph.
	Let $X = (X_t)_{t > 0}$ be a continuous-time random walk on $Q_d$ starting at $\zerovec$.
	Then,
	\[
		\Pr[X_t = \zerovec] = \parens[\Big]{\frac{1 + e^{-2t/d}}{2}}^d.
	\]
\end{claim}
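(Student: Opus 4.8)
The plan is to exploit the product structure of the hypercube. Recall that, with the normalization fixed in the introduction, the Laplacian of $Q_d$ is $L = \sum_{i=1}^d L^{(i)}$, where $L^{(i)}$ acts as $\frac{1}{d}$ times the single-edge Laplacian on the $i$-th coordinate and as the identity on the remaining coordinates; these summands pairwise commute, so the heat kernel factorizes as $H_t = e^{-tL} = \bigotimes_{i=1}^d e^{-tL^{(i)}}$. Equivalently, in the Poisson-process description of the walk, each of the $d$ coordinates is flipped by its own independent rate-$1/d$ Poisson clock, so the $d$ coordinate processes are mutually independent. Hence $\Pr[X_t = \zerovec] = \prod_{i=1}^d \Pr[\text{coordinate } i \text{ equals } 0 \text{ at time } t]$, and it suffices to compute a single factor.

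Next I would compute the single-coordinate return probability. Coordinate $i$ is at $0$ at time $t$ if and only if its Poisson clock has fired an even number of times during $[0,t]$, and the number of firings is distributed as $\pois(t/d)$. Using the standard identity obtained by averaging the probability generating function evaluated at $+1$ and $-1$, for $N \sim \pois(\lambda)$ one gets $\Pr[N \text{ even}] = \tfrac12\bigl(e^{-\lambda}e^{\lambda} + e^{-\lambda}e^{-\lambda}\bigr) = \frac{1 + e^{-2\lambda}}{2}$. Taking $\lambda = t/d$ gives $\frac{1+e^{-2t/d}}{2}$ for each coordinate, and multiplying the $d$ independent factors yields $\Pr[X_t = \zerovec] = \bigl(\frac{1+e^{-2t/d}}{2}\bigr)^d$, as claimed.

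I do not expect a genuine obstacle here; the computation is short, and the only points requiring care are bookkeeping ones: confirming that the edge-weight normalization $1/d$ in the definition of $L$ corresponds to Poisson clocks of rate $1/d$, and justifying the tensor factorization of $e^{-tL}$ via commutativity of the per-coordinate Laplacians. As a cross-check (and an alternative proof that avoids the probabilistic language), the characters $\chi_S(x) = (-1)^{\sum_{i \in S} x_i}$ diagonalize $L$ with eigenvalue $2|S|/d$, so $H_t(\zerovec,\zerovec) = 2^{-d}\sum_{k=0}^d \binom{d}{k} e^{-2tk/d} = 2^{-d}(1 + e^{-2t/d})^d$ by the binomial theorem — the same identity seen spectrally.
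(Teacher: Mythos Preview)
Your argument is correct and follows essentially the same approach as the paper: use the independence of the $d$ coordinate processes (each a rate-$1/d$ Poisson flip), compute $\Pr[\pois(t/d)\text{ even}] = (1+e^{-2t/d})/2$, and take the $d$-fold product. The additional Laplacian-tensorization remark and the spectral cross-check via characters are sound but go beyond what the paper records.
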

\begin{proof}
	Since $X$ moves in each coordinate with rate $1/d$,
	it follows that for each $i \in [d]$ the number of
	steps in direction $i$ up to time $t$ is distributed like $\Pois(t/d)$.
	Therefore,
	\[
	\Pr[(X_t)_i = 0] = \Pr[\Pois(t/d) \text{ is even}] = (1 + e^{-2t/d})/2 , \]
	where we used that the probability that $\Pois(\lambda)$ is even is
	\[
		\Pr[\Pois(\lambda) \text{ is even}]
		= e^{-\lambda} \cdot \sum_{j \text{ even}} \frac{\lambda^j}{j!}
		= e^{-\lambda} \cdot \frac{1}{2} \parens[\Big]{\sum_{j=0}^\infty \frac{\lambda^j}{j!} + \sum_{j=0}^\infty \frac{(-\lambda)^j}{j!}}
		= e^{-\lambda} \cdot \frac{1}{2} \parens[\Big]{e^{\lambda} + e^{-\lambda}} \; .
	\]	
	Since the coordinates of $X$ move independently	the result follows.
\end{proof}

We now prove Lemma~\ref{lemma:non-monotonicity}.
\begin{proof}[Proof of Lemma~\ref{lemma:non-monotonicity}]
	We show below that for all $d \ge 1$, it holds that
	\begin{enumerate}
		\item $C_d(\sqrt{d}) \ge e^{-1}\sqrt{d}$,
		\item $C_d(d) \le 6$.
	\end{enumerate}
	This clearly proves the lemma for $d$ sufficiently large.
	
	To prove Item 1, we show that if a walk starting
    from the origin is at the origin at time $\sqrt{d}$,
	then 
	with constant probability it stayed at the origin throughout that time interval.
    Intuitively, this is because the
	probability of a coordinate flipping twice during that time
	is of order only $1/d$ and so with constant probability 
	none of the $d$ coordinates flips.
    In more detail, by Claim~\ref{claim:return prob},
	\[
		\Pr[X_{\sqrt{d}} = \zerovec] =
		\parens[\Big]{\frac{1 + e^{-2/\sqrt{d}}}{2}}^d
		\leq \parens[\Big]{1 - \frac{1}{\sqrt{d}} + \frac{1}{d}}^d
		= \parens[\Big]{1 - \frac{\sqrt{d}-1}{d}}^d
		\leq e^{-\sqrt{d}+1} \; ,
	\]
	where we used the inequality
	$e^{-x} \le 1-x+x^2/2$ valid for all $x \ge 0$.
	On the other hand, by definition of a continuous-time random walk the
	probability that $X$ stays in $\zerovec$ during the entire time interval
	$[0,\sqrt{d}]$ is equal to $\Pr[X_{[0,\sqrt{d}]} \equiv \zerovec] =e^{-\sqrt{d}}$.
	Therefore,
	\[
		\Pr[X_{[0,\sqrt{d}]} \equiv \zerovec | X_{\sqrt{d}} = \zerovec] \geq e^{-1},
	\]
	and hence the expected time spent at the origin conditioned on $X_{\sqrt{d}} = \zerovec$
	is as claimed in Item 1.
	
	We next prove Item 2. Intuitively, here there is enough time for coordinates
	to flip twice, and only a very small part of the time will be
	spent at the origin.
    By definition of $C_d$ and Claim~\ref{claim:return prob} we have
	\begin{align*}
		C_d(t) &= \int_{0}^t \frac{\Pr[ X_s = \zerovec] \cdot \Pr[X_{t-s} = \zerovec]}{\Pr[X_t = \zerovec]} ds \\
		&= \int_{0}^t (h_d(t,s))^d ds,
	\end{align*}
	where
	\[
	h_d(t,s) =
	\frac{(1 + e^{-2s/d})(1 + e^{-2(t-s)/d})}{2(1 + e^{-2t/d})}
	=
	\frac{1 + e^{-2s/d} + e^{-2(t-s)/d} + e^{-2t/d}}{2(1 + e^{-2t/d})}
	.\]
	Since $h_d(t,s)$ is convex as a function of $s$,
	for all $0 \le s \le t/2$ we have $h_d(t,s) \le \ell(s)$
	where $\ell$ is the unique linear function
	satisfying $\ell(0)=h_d(t,0)$ and $\ell(t/2) = h_d(t,t/2)$.
	Therefore, taking $t=d$, we get
		\[
			h_d(d,s)
			\leq \ell(s/d)
			= 1 - \frac{cs}{d} \; ,
		\]	
	where $c = (1 -e^{-1})^2/(1 + e^{-2})$.
	Noting that $h_d(t,s) = h_d(t,t-s)$, we get
	\[
		C_d(d) = \int_{0}^d (h_d(d,s))^d ds
		 = 2 \int_{0}^{d/2} (h_d(d,s))^d ds
		< 2 \int_{0}^{d/2} \parens[\big]{1 -\frac{c s}{d}}^d ds
		< 2 \int_{0}^{d/2} e^{-cs} ds
%DETAILS: \int_{0}^{d/2} e^{-cs} = (-1/c) e^{-cs} from 0 to d/2 = 1/c (1 - e^{-cd/2}) < 1/c
		\leq \frac{2}{c} \; .
	\]
	This completes the proof of Lemma~\ref{lemma:non-monotonicity}.
\end{proof}

\section{Proof of Theorem~\ref{thm:non-monotone}}\label{sec:proof of thm}

In this section we prove Theorem~\ref{thm:non-monotone}.
We first
give a proof for a weighted graph, and then remark
on how to convert it into an unweighted graph.
For $d \in \N$ sufficiently large
we define the weighted graph $G$ to be the \emph{lamplighter graph} on $Q_d$,
whose edges corresponding to steps on $Q_d$ are of weight $1/d$,
and edges corresponding to toggling a lamp are of weight $\eps$,
for some $\eps>0$ sufficiently small that depends on $d$
and $t_1,t_2$ from Lemma~\ref{lemma:non-monotonicity}.

In more detail, the weighted lamplighter graph $G$ is described by
placing a lamp at each vertex of $Q_d$ and a lamplighter walking on $Q_d$.
A vertex of $G$ is described by the location $x \in \{0,1\}^d$ of the lamplighter,
and a configuration $f : \{0,1\}^d \to \{0,1\}$ indicating which lamps are currently on.
In each step the lamplighter either makes a step in the graph $Q_d$
or toggles the state of the lamp in the current vertex.
More formally, we have an edge between $(x,f)$ and $(y,g)$ if and only if either
\begin{enumerate}
\item $(x,y) \in E_d$ and $f = g$ (this corresponds to a step in $Q_d$) or
\item $x = y$ and $f$ and $g$ differ on the input $x$ and are equal on all other inputs
(this corresponds to toggling a lamp at $x$).
\end{enumerate}
The weights of the edges of the first type are $1/d$,
and the edges of the second type are of weight $\eps$.
Thus, in a random walk on $G$,
the steps of the lamplighter are distributed as in a random walk on $Q_d$,
and the number of times the lamps are toggled in a time interval of length $T$
is distributed like $\Pois(\eps T)$ independently of the the lamplighter's walk.
It is well known that the lamplighter graph is a Cayley graph (see, e.g.,~\cite{PeresRevelle04}).
%
%More formally, $G=(V,E)$ is defined as the wreath product $G = \Z_2 \wr Q_d$.
%The vertices of $G$ are of the form $(x,f)$, where
%$x \in \{0,1\}^d$ is a vertex in $Q_d$, and $f :\{0,1\}^d \to \{0,1\}$.
%We have an edge between $(x,f)$ and $(y,g)$ if and only if either
%\begin{enumerate}
%\item $(x,y) \in E_d$ and $f = g$ (this corresponds to a step in $Q_d$) or
%\item $x = y$ and $f$ and $g$ differ on the input $x$ and are equal on all other inputs
%(this corresponds to toggling a lamp at $x$).
%\end{enumerate}
%The weights of the edges of the first type are $1/d$,
%and the edges of the second type are of weight $\eps$.

%
%\begin{claim}\label{claim:cayley}
%	The graph $G$ is a (weighted) Cayley graph, and in particular it is vertex transitive.
%\end{claim}
%\begin{proof}
%		Using the lamplighter interpretation of $G$ it is straightforward to verify
%		$G$ is a Cayley graph with multiplication given by
%		$(x,f) (y,g) = (x+y, f + g_x)$, where $g_x$ is defined as $g_x(z) = g(x+z)$ for all $z \in \{0,1\}^d$.
%		The generators of $G$ are
%		\[
%			S_G = \{(e_i, f_0) : i \in [d]\} \cup \{(\zerovec, \delta_{\zerovec})\},
%		\]
%		where $e_i \in \{0,1\}^d$ is the unit vector with 1 in the $i$'th coordinate,
%		$f_0$ is the constant zero function, and $\delta_{\zerovec}$ is the function
%		that outputs 1 on the input $\zerovec \in \{0,1\}^d$, and outputs $0$ otherwise.
%\end{proof}

Let $u$ be the vertex in $G$ corresponding to the lamplighter being at the origin with all lights off.
Let $v$ be the vertex in $G$ corresponding to the lamplighter being at the origin with the light at the origin being on, and all other lights off.
We show below that $\ruv$ is not monotonically non-decreasing.
More specifically, we show that $\ruv(t_1) > \ruv(t_2)$,
where $t_1 < t_2$ are from Lemma~\ref{lemma:non-monotonicity}.

Let $X=(X_t)_{t \geq 0}$ be a continuous-time random walk on $G$ starting at $X_0 = u$.
Denote by $Y_t$ the number of times a toggle occurred during the time interval $[0,t]$.
Denote by $Z = (Z_t)_{t \geq 0}$ the trajectory of the lamplighter,
i.e., the projection of $X$ to the first coordinate.
Note that by definition $Z$ is a continuous-time random walk on $Q_d$,
and that $Z$ is independent of $Y_t$.

\begin{claim}\label{claim:puu puv}
	Let $u,v \in V$ be as above. Then, for all $t>0$ it holds that
	\begin{equation}\label{eq:puu}
		0 \leq \puu(t) - e^{-\eps t} \cdot \Pr[Z_{t}=\zerovec]  \leq \eps^2 t^2 \; ,
	\end{equation}
	and
	\begin{equation}\label{eq:puv}
		0 \leq \puv(t) - \eps e^{-\eps t} \cdot C_d(t) \cdot \Pr[Z_{t}=\zerovec] \leq \eps^2 t^2 \; .
	\end{equation}
\end{claim}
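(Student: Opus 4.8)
The plan is to condition on the number $Y_t$ of lamp toggles in $[0,t]$, which is $\Pois(\eps t)$-distributed and, by construction, independent of the lamplighter's trajectory $Z$ (itself a continuous-time walk on $Q_d$ started at $\zerovec$). Splitting $\puu(t) = \sum_{k\ge 0}\Pr[X_t = u,\, Y_t = k]$ and likewise for $\puv$, I would show that the only leading contribution to $\puu$ comes from $k=0$, the only leading contribution to $\puv$ comes from $k=1$, and in both cases the remaining terms ($k\ge 2$) together form an error bounded by $\Pr[Y_t \ge 2]$.

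First, the Poisson tail: $\Pr[Y_t \ge 2] = 1 - e^{-\eps t}(1+\eps t) \le (\eps t)^2/2 \le \eps^2 t^2$, which follows by checking that $\lambda \mapsto e^{-\lambda}(1+\lambda) - 1 + \lambda^2/2$ vanishes at $0$ and has nonnegative derivative $\lambda(1-e^{-\lambda})$ on $[0,\infty)$. Since $\Pr[X_t = u,\, Y_t\ge 2]$ and $\Pr[X_t = v,\, Y_t\ge 2]$ are nonnegative and at most $\Pr[Y_t\ge 2]$, this is exactly the $[0,\eps^2 t^2]$ slack in \eqref{eq:puu} and \eqref{eq:puv}.

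Next, the main terms. If $Y_t = 0$ (probability $e^{-\eps t}$) all lamps are off, so $X_t = u \iff Z_t = \zerovec$ while $X_t \ne v$; this yields the term $e^{-\eps t}\Pr[Z_t = \zerovec]$ in \eqref{eq:puu} and contributes $0$ to \eqref{eq:puv}. If $Y_t = 1$, the single toggle lands at whatever vertex the lamplighter currently occupies, so the configuration has exactly one lit lamp and hence $X_t \ne u$; this contributes $0$ to \eqref{eq:puu}. For $\puv$, conditioned on $Y_t = 1$ the toggle time $s$ is uniform on $[0,t]$ and independent of $Z$, and $X_t = v$ holds precisely when $Z_s = \zerovec$ (the lit lamp is the one at the origin) and $Z_t = \zerovec$ (the lamplighter is back at the origin). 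By the Markov property and time-homogeneity of $Z$, $\Pr[Z_s = \zerovec,\, Z_t = \zerovec] = \Pr[Z_s = \zerovec]\,\Pr[Z_{t-s} = \zerovec]$, so
\[
  \Pr[X_t = v,\, Y_t = 1] = \eps t\, e^{-\eps t}\cdot \frac1t\int_0^t \Pr[Z_s = \zerovec]\,\Pr[Z_{t-s} = \zerovec]\,ds .
\]
Finally I would identify this integral using the proof of Lemma~\ref{lemma:non-monotonicity}: since $Z$ is exactly the hypercube walk from $\zerovec$, the identity $C_d(t) = \int_0^t \Pr[Z_s = \zerovec \mid Z_t = \zerovec]\,ds$ (equivalently $\int_0^t \Pr[Z_s = \zerovec]\Pr[Z_{t-s} = \zerovec]\,ds = C_d(t)\Pr[Z_t = \zerovec]$) turns the right-hand side into $\eps e^{-\eps t} C_d(t)\Pr[Z_t = \zerovec]$, the main term of \eqref{eq:puv}.

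The one delicate point is the $k=1$ analysis for $\puv$: one must check that ``the unique lit lamp sits at the origin'' is the same event as ``$Z$ is at the origin at the uniformly random toggle time'', and that conditioning on $\{Y_t = 1\}$ leaves the law of $Z$ unchanged — both immediate from the independence of the toggle Poisson process and the lamplighter's walk. The remaining steps are bookkeeping over the cases $Y_t \in \{0,1,\ge 2\}$ together with the elementary Poisson estimate above.
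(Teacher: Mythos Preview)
Your proposal is correct and follows essentially the same decomposition as the paper: split according to $Y_t\in\{0,1,\ge 2\}$, bound the $Y_t\ge 2$ contribution by $\Pr[Y_t\ge 2]\le \eps^2 t^2$, and identify the $Y_t=0$ and $Y_t=1$ terms exactly. The only minor difference is in the bookkeeping for the $Y_t=1$ term in~\eqref{eq:puv}: the paper conditions on the trajectory $Z$ and uses that the toggles over the time $T_0$ spent at the origin versus the time $t-T_0$ spent elsewhere are independent Poissons, yielding $\Pr[E_t\mid Z]=\eps e^{-\eps t}T_0$ and hence $\eps e^{-\eps t}\,\expect[T_0\mid Z_t=\zerovec]=\eps e^{-\eps t}C_d(t)$ directly from the definition of $C_d$; you instead condition on the uniform toggle time and appeal to the integral representation of $C_d(t)$ from the proof of Lemma~\ref{lemma:non-monotonicity}. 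Both routes are equally short and arrive at the same expression.
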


%We show below that
%\eqref{eq:puu} $|\puu(t) - e^{-\eps t} \cdot \Pr[Z_{t}=\zerovec]| \le c_1\eps^2$
%and
%\eqref{eq:puv} $|\puv(t) - \eps e^{-\eps t} \cdot C_d(t) \cdot \Pr[Z_{t}=\zerovec]| \le c_2\eps^2$,
%for some constants $c_1,c_2 > 0$ that may depend on $d$ and $t$ but are independent of $\eps$.
Using the claim, 
\[
	\ruv(t) = \frac{\puv(t)}{\puu(t)} = \eps \cdot C_d(t) \pm O(\eps^2),
\]
where $O(\cdot)$ hides a constant that depends on $d$ and $t$.
In particular, for $t_1 < t_2$
from Lemma~\ref{lemma:non-monotonicity}, and $\eps>0$ sufficiently small
we get that $\ruv(t_1) > \ruv(t_2)$, which proves Theorem~\ref{thm:non-monotone}.

Intuitively, \eqref{eq:puu} holds because the probability
of toggling a lamp twice is very small,
and hence $\puu(t)$ is approximately equal to the probability
that no lamp has changed its state multiplied by the probability 
that a random walk on $Q_d$ will be at the origin at time $t$.
The intuition for~\eqref{eq:puv} is that
in order to get from $u$ to $v$, 
in addition to getting back to the origin,
the lamplighter must toggle the switch
while being at the origin, and the probability of that is roughly $\eps \cdot C_d(t)$.

\begin{proof}[Proof of Claim~\ref{claim:puu puv}]
For $\puu$ we have
\[
  p_{u,u}  =  \Pr[X_t = u \wedge Y_t=0] + \Pr[X_t=u \wedge Y_t \ge 2] \; .
\]
Since $Y_t$ is distributed  like $\Pois(\eps t)$, the second term satisfies
\[
  0 \le \Pr[X_t=u \wedge Y_t \ge 2]
	  \le \Pr[Y_t \ge 2]
		\le \eps^2 t^2 \; ,
\]
and for the first term, by independence between $Y_t$ and $Z_t$ we have
\[
  \Pr[X_t = u \wedge Y_t=0]  = \Pr[Z_t = \zerovec \wedge Y_t=0] =
  e^{-\eps t} \cdot \Pr[Z_{t}=\zerovec] \; ,
\]
proving~\eqref{eq:puu}.

For $\puv$ we similarly have
\[
	\puv(t)= \Pr[X_t = v \wedge Y_t = 1] + \Pr[X_t = v \wedge Y_t \geq 2].
\]
As above, the second term is at most $\eps^2 t^2$.
%{DETAILS: $\Pr[Y_t < 2] = e^{-\eps t} + \eps t e^{-\eps t} = (1 + \eps t) e^{-\eps t} \geq (1 + \eps t) (1 - \eps t)  = 1 - \eps^2 t^2$}
For the first term,
let $E_t$ be the event that $Y_t = 1$, and the unique lamp that is on at time $t$ is the lamp at the origin.
Denote by $T_0$ the time spent by $Z$ at the origin in the time interval $[0,t]$.
Then, conditioning on $Z$, the event $E_t$ holds if and only if a unique switch happened during $T_0$ time,
and zero switches in the remaining time.
Therefore, by independence of a Poisson process in disjoint intervals
\[
	\Pr[E_t |Z]
	= \Pr[\Pois(\eps T_0) = 1 |Z] \cdot \Pr [\Pois(\eps (t - T_0)) = 0|Z]
	= \eps T_0 \cdot e^{-\eps T_0} \cdot e^{-\eps (t-T_0)}
	= \eps e^{-\eps t} \cdot T_0.
\]
This implies that
\[
\Pr[X_t = v \wedge Y_t = 1]
= \Pr [E_t | Z_t = \zerovec] \cdot \Pr[Z_t = \zerovec]
= \eps e^{-\eps t} \cdot \expect[T_0 | Z_t = \zerovec] \cdot \Pr[Z_t = \zerovec].
\]
Therefore, since $C_d(t) = \expect[T_0 | Z_t = \zerovec]$ we get~\eqref{eq:puv}, and the claim follows.
\end{proof}

\paragraph{Converting $G$ into an unweighted graph.}
	Below we show how to convert a weighted Cayley graph $G$ into an unweighted one,
	while preserving the property in Theorem~\ref{thm:non-monotone}.
	Let $(G,S_G)$ be a weighted Cayley graph with the generating set $S_G = \{g_1,\dots, g_k\}$,
	and suppose that the weights $w:S_G \to \R_+$ are integers for all $g \in S_G$.
	For $N \in \N$ sufficiently large define the graph $H$ by
	replacing each vertex $v \in G$ with an $N$-clique $\{\pr{v}{i} : i \in \Z_N\}$,
	and replacing each edge $(u,ug)$ in $G$ of weight $w(g)$ with $w(g)$ perfect
	matchings $\{\pr{u}{i},\pr{ug}{i+j} : i \in \Z_N\}_{j=1}^{w(g)}$.

	Formally, the vertices of the graph $H$ are $G \times \Z_N = \{\pr{v}{i} : v \in G, i \in \Z_N\}$,
	and the set of generators $S_H$ for the Cayley graph on $H$ is given by
	\[
		S_H = \{\pr{0}{i} : i \in \Z_N \setminus \{0\} \}
		\bigcup \cup_{g \in S_G} \{ \pr{g}{j}: j \in \{1,\dots,w(g)\} \}.
	\]

	Note that the projection of a continuous-time random walk on $H$
	to the first coordinate is a random walk on $G$, slowed down by $\deg(H)$.
	Moreover, assuming $N$ is larger than, say, $\sum_{g \in S_G} w(g)$,
	after constant time the two coordinates become close to independent
	with the 	second coordinate being uniform.
	Therefore, if $u,v$ are vertices in $G$, and $x = \pr{u}{0}, y = \pr{v}{0}$
	are the corresponding vertices in $H$,
	then for any time $t > 0$ and $t' = \deg(H) \cdot t$
	it holds that $\pxy(t') = \frac{1}{N}(\puv(t) \pm o_N(1))$
	and hence $\rxy(t') = \ruv(t') \pm o_N(1)$.
	
	For the graph $G$ given in the proof of Theorem~\ref{thm:non-monotone}
	above, we may assume that $1/\eps$ is an integer,
	and so, by multiplying all weights by $d/\eps$ we get
	a Cayley graph with integer weights.
	Hence, by applying the foregoing transformation
	we get a simple unweighted Cayley graph $H$ for which $\ruv$ is not
	monotonically non-decreasing for some $u,v \in H$.

\appendix
\section{Appendix: A counterexample in a regular non-transitive graph}\label{sec:appendix}

Below we give a simple example of a regular non-transitive graph
such that $\ruv(t) > 1$ for some vertices $u,v$ and some time $t$;
in particular, $\ruv(t)$ is not monotonically non-decreasing,
since $\ruv(t) \to 1$ as $t \to \infty$.
We thank Jeff Cheeger~\cite{Cheeger} for pointing this out to us.

\begin{proposition}\label{prop:cheeger}
Let $L$ be the Laplacian of a regular graph on vertex set $V$.
Denote its eigenvalues by
$0 = \lambda_1 \leq \lambda_2 \leq \dots \leq \lambda_{|V|}$
and by $f_i \in \R^V$ the corresponding normalized eigenvectors.
Suppose that $0 < \lambda_2 < \lambda_3$,
and that $f_2$ is such that $f_2(v) > f_2(u) > 0$ for some vertices $u,v$.
Then, there is some $t>0$ such that $\ruv(t) > 1$.
\end{proposition}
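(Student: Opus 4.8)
The plan is to expand both $p_{u,v}$ and $p_{u,u}$ in the eigenbasis of $L$. Write $n=|V|$. Since $H_t=e^{-tL}$ and the $f_i$ form an orthonormal eigenbasis, the spectral theorem gives $p_{a,b}(t)=H_t(a,b)=\sum_{i=1}^{n}e^{-t\lambda_i}f_i(a)f_i(b)$ for all vertices $a,b$. Because $\lambda_2>0$ the graph is connected, and since it is regular the eigenvalue $\lambda_1=0$ has the constant eigenvector, so $f_1=n^{-1/2}\mathbf{1}$. Subtracting, the $i=1$ terms cancel and
\[
	p_{u,v}(t)-p_{u,u}(t)=e^{-t\lambda_2}f_2(u)\bigl(f_2(v)-f_2(u)\bigr)\;+\;\sum_{i=3}^{n}e^{-t\lambda_i}f_i(u)\bigl(f_i(v)-f_i(u)\bigr).
\]

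Next I would multiply both sides by $e^{t\lambda_2}>0$ and let $t\to\infty$. The hypothesis $\lambda_2<\lambda_3$ gives $\lambda_2<\lambda_i$ for every $i\ge 3$, so each factor $e^{-t(\lambda_i-\lambda_2)}$ tends to $0$; hence
\[
	\lim_{t\to\infty}e^{t\lambda_2}\bigl(p_{u,v}(t)-p_{u,u}(t)\bigr)=f_2(u)\bigl(f_2(v)-f_2(u)\bigr)>0,
\]
using $f_2(v)>f_2(u)>0$. Consequently $p_{u,v}(t)-p_{u,u}(t)>0$ for all sufficiently large $t$, and dividing by $p_{u,u}(t)>0$ yields $\ruv(t)=p_{u,v}(t)/p_{u,u}(t)>1$ for such $t$, which is exactly the claim.

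There is no genuine obstacle here beyond two small points of care. The first is the identification of $f_1$ with the normalized all-ones vector: this is precisely where regularity, together with $\lambda_2>0$ (i.e. connectedness), is used. The second is noting that the strict inequality $\lambda_2<\lambda_3$ is what makes every $i\ge 3$ term exponentially negligible relative to the $i=2$ term, so that the limit above is governed solely by $f_2$. If one wanted an explicit threshold, it would suffice to take $t$ large enough that $\sum_{i\ge 3}e^{-t(\lambda_i-\lambda_2)}\,|f_i(u)|\,\bigl(|f_i(v)|+|f_i(u)|\bigr)<f_2(u)\bigl(f_2(v)-f_2(u)\bigr)$, but the limiting argument already gives the proposition.
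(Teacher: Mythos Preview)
Your proof is correct and follows essentially the same approach as the paper: both expand $p_{u,v}(t)$ and $p_{u,u}(t)$ in the eigenbasis of $L$, observe that the $i=1$ contributions coincide (since $f_1$ is constant on a connected regular graph), and use the spectral gap $\lambda_2<\lambda_3$ so that for large $t$ the comparison is governed by the $i=2$ term, which is strictly larger at $v$ than at $u$ because $f_2(v)>f_2(u)>0$. The only cosmetic difference is that you subtract first and then take the limit of $e^{t\lambda_2}(p_{u,v}-p_{u,u})$, whereas the paper writes each quantity as $c+e^{-\lambda_2 t}f_2(u)f_2(w)+O(e^{-\lambda_3 t})$ and compares; the content is identical.
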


\begin{proof}
	Let $\pi_u \in \R^V$ be the vector with $\pi_u(u) = 1$
	and $\pi_u(u') = 0$ for all $u' \neq u$.
	Writing $\pi_u = \sum \alpha_i f_i$ for $\alpha_i = \langle \pi_u, f_i \rangle = f_i(u)$,
	for all $w \in V$ we have
	\[
		e^{-t L} \pi_u(w)
	= \sum_{i=1}^{|V|} e^{-t \lambda_i} \alpha_i \cdot f_i(w)
	= c + e^{-\lambda_2 t} f_2(u) f_2(w) + O(e^{-\lambda_3 t}),
	\]
	where $O()$ hides some constants that may depend on the graph, but not on $t$,
	and $c = \alpha_1 \cdot f_1(w)$ is independent of $w$ since $f_1$ is a constant function.
	Using the facts that $f_2(v) > f_2(u) > 0$
	and $\lambda_3 > \lambda_2$, it follows that
	for sufficiently large $t$,
	 \[
	\ruv(t) = \frac{e^{-t L}\pi_u(v)}{e^{-t L} \pi_u(u)} > 1 \; ,
	\]
	as desired.
\end{proof}

  \begin{figure}[htbp]
    \centering
    \includegraphics[height=60mm]{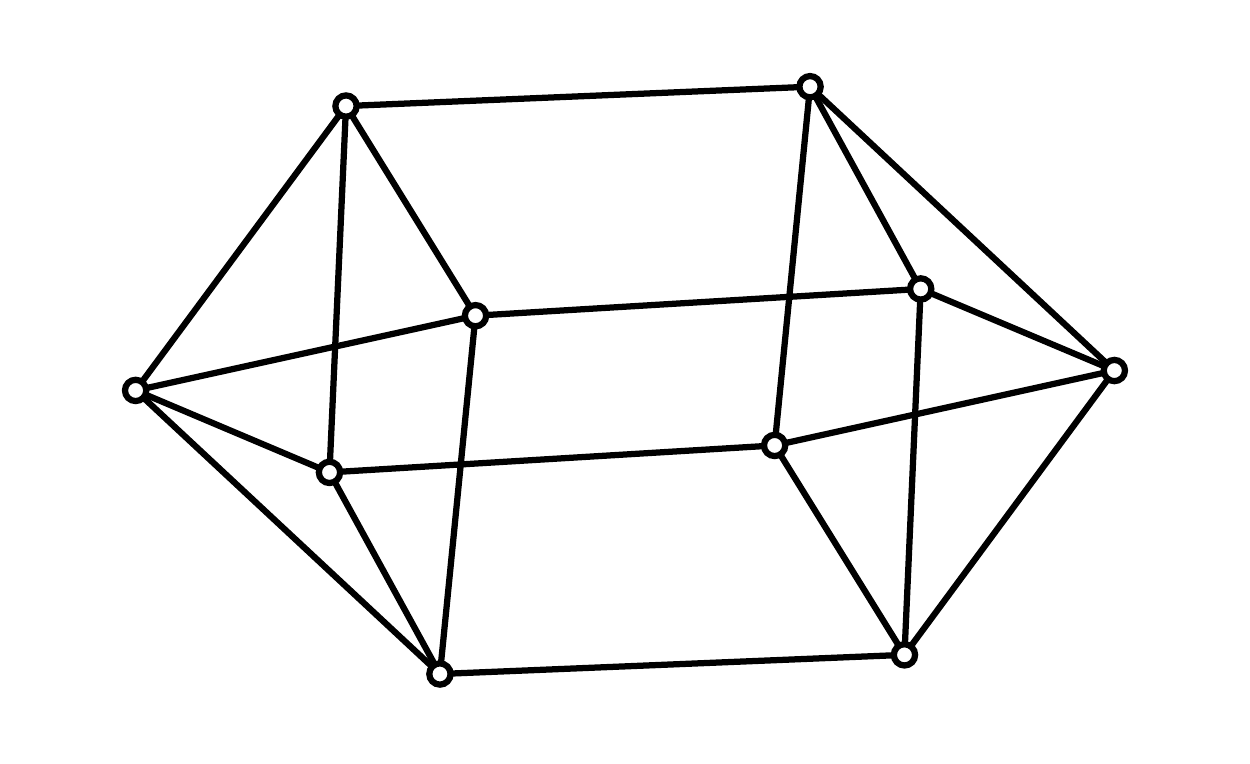}
    \caption{A cube with two square pyramids attached.}
    \label{fig:Oct}
  \end{figure}

Graphs satisfying the constraints in Proposition~\ref{prop:cheeger} are in
abundance. As a concrete example, consider the 4-regular graph on 10 vertices shown
in Figure~\ref{fig:Oct}.
Using Mathematica, we see that the second eigenvalue of the Laplacian of this
graph is $\lambda_2 = \frac{1}{8}(7 - \sqrt{17}) \approx 0.36$,
and it is a simple eigenvalue.
The corresponding (non-normalized) eigenvector with vertices ordered from left to right is
$(c,1,1,1,1,-1,-1,-1,-1,-c)$,
where $c = 3-\frac{1}{2}(7 - \sqrt{17}) \approx 1.56$.
In particular, Proposition~\ref{prop:cheeger} is applicable to this graph.

\paragraph{Acknowledgements}
We thank Eyal Lubetzky and Yuval Peres for helpful comments.
We also thank an anonymous referee for useful comments, and
for pointing out that our construction can be seen as a
lamplighter walk.
\bibliographystyle{alphaabbrvprelim}
\bibliography{monotonicity}

\end{document}